\long\def\delete#1{}
\newtheorem{theorem}{Theorem}[section]
\newtheorem{lemma}[theorem]{Lemma}
\newtheorem{corollary}[theorem]{Corollary}
\newtheorem{example}[theorem]{Example}
\newtheorem{construction}[theorem]{Construction}
\newtheorem{remark}[theorem]{Remark}
\newcommand{\be}{\begin{equation}}
\newcommand{\ee}{\end{equation}}
\newcommand{\bea}{\begin{eqnarray}}
\newcommand{\eea}{\end{eqnarray}}
\newcommand{\bean}{\begin{eqnarray*}}
\newcommand{\eean}{\end{eqnarray*}}
\def\qed{\hfill$\Box$\vspace{11pt}}
\def\la{\langle}
\def\ra{\rangle}
\def\FFF{\Bbb F}
\def\ZZZ{\Bbb Z}
\def\FFF{\mathbb{F}}
\def\BB{{\cal B}}
\def\b0{{\bf 0}}
\def\be{{\bf e}}
\def\Om{\Omega}
\def\a{\alpha}
\def\b{\beta}
\def\l{\lambda}
\def\om{\omega}
\def\Aut{{\rm Aut}}
\def\AGL{{\rm AGL}}
\def\Hol{\mathrm{Hol}}
\def\gcd{{\rm gcd}}
\title{Resolvable Mendelsohn designs and finite Frobenius groups}
\author{D. F. Hsu\\ 
Department of Computer and Information Sciences\\
Fordham University\\
New York, NY 10023, USA\\ 
\small{\it hsu@cis.fordham.edu} \\ \\
Sanming Zhou\\ 
School of Mathematics and Statistics\\
The University of Melbourne\\
Parkville, VIC 3010, Australia\\
\small{\it sanming@unimelb.edu.au}}
\begin{document}
\openup 0.8\jot \maketitle

\begin{abstract}
We prove the existence and give constructions of a $(p(k)-1)$-fold perfect resolvable $(v, k, 1)$-Mendelsohn design for any integers $v > k \ge 2$ with $v \equiv 1 \mod k$ such that there exists a finite Frobenius group whose kernel $K$ has order $v$ and whose complement contains an element $\phi$ of order $k$, where $p(k)$ is the least prime factor of $k$. Such a design admits $K \rtimes \la \phi \ra$ as a group of automorphisms and is perfect when $k$ is a prime. As an application we prove that for any integer $v = p_{1}^{e_1} \ldots p_{t}^{e_t} \ge 3$ in prime factorization, and any prime $k$ dividing $p_{i}^{e_i} - 1$ for $1 \le i \le t$, there exists a resolvable perfect $(v, k, 1)$-Mendelsohn design that  admits a Frobenius group as a group of automorphisms. We also prove that, if $k$ is even and divides $p_{i} - 1$ for $1 \le i \le t$, then there are at least $\varphi(k)^t$ resolvable $(v, k, 1)$-Mendelsohn designs that admit a Frobenius group as a group of automorphisms, where $\varphi$ is Euler's totient function.
  
{\it Key words:} Mendelsohn design; balanced directed cycle design; resolvable Mendelsohn design; perfect Mendelsohn design;  Frobenius group; fixed-point-free automorphism

{\it AMS Subject Classification (2010):} 05B05
\end{abstract}

\section{Introduction}
\label{sec:int}

The purpose of this paper is to explore connections between resolvable Mendelsohn designs and Frobenius groups. We will show that Frobenius groups provide a natural means for constructing resolvable Mendelsohn designs with $\l = 1$. 

All sets and groups considered in the paper are finite. Let $v \ge k \ge 2$ and $\l \ge 1$ be integers. A \emph{$(v, k, \l)$-Mendelsohn design} \cite{Mend}, or a {\em $(v, k, \l)$}-MD for short, consists of a set $X$ (of {\em points}) of cardinality $v$ and a collection $\cal B$ of cyclically ordered subsets of $X$ (called {\em blocks}) each with cardinality $k$, such that every ordered pair of elements of $X$ are consecutive in exactly $\l$ blocks. In a block $(a_1, a_2, \ldots, a_k)$ with cyclic order $a_1 < a_2 < \ldots < a_k < a_1$, $a_i$ and $a_{i+t}$ are said to be {\em $t$-apart} for $i=1, \ldots, k$ with subscripts modulo $k$. A $(v, k, \l)$-MD $(X, {\cal B})$ is called \emph{$\ell$-fold perfect} \cite{HK1} if, for $t=1, \ldots, \ell$, every ordered pair of elements of $X$ appears $t$-apart in exactly $\l$ blocks. A $(v, k, \l)$-MD is said to be \emph{perfect} \cite{Mend}, denoted by {\em $(v, k, \l)$}-PMD, if it is $(k-1)$-fold perfect. It is not difficult to see that any $(v, k, \l)$-MD has $\l v(v-1)/k$ blocks and thus satisfies $\l v(v-1) \equiv 0 \mod k$. A $(v, k, \l)$-MD is called \emph{resolvable} \cite{Ben-Men}, denoted {\em $(v, k, \l)$}-RMD, if either $v \equiv 0 \mod k$ and the set of blocks can be partitioned into $\l (v-1)$ parts each containing $v/k$ pairwise disjoint blocks, or $v \equiv 1 \mod k$ and the set of blocks can be partitioned into $\l v$ parts each containing $(v-1)/k$ pairwise disjoint blocks. We denote a resolvable perfect $(v, k, \l)$-MD by $(v, k, \l)$-RPMD. A $(v, k, \l)$-MD can be equivalently defined as a decomposition of $\l \overrightarrow{K}_v$ into edge-disjoint directed cycles of length $k$, where $\l \overrightarrow{K}_v$ is the directed complete multigraph of $v$ vertices with $\l$ directed edges between each ordered pair of vertices. From this viewpoint a $(v, k, \l)$-MD is also called a \emph{balanced directed cycle design} with parameters $(v, k, \l)$ or a \emph{$(v, k, \l)\overrightarrow{C}_k$-design} (see e.g. \cite{NP, P}). 

Mendelsohn designs are very well studied, and a large number of results about them have been produced since the 1970s. Here we are able to mention only a few results on PMDs and RMDs due to limited space. As mentioned above, a necessary condition for the existence of a $(v, k, \l)$-MD is that $\l v(v-1) \equiv 0 \mod k$. This condition has been proved to be sufficient for the existence of a $(v, k, \l)$-PMD when: (i) $k=3$, except for the non-existing $(6, 3, 1)$-PMD \cite{Ben2, Mend1}; (ii) $k=4$, except for $v=4$ and $\l$ odd, $v=8$ and $\l = 1$ \cite[Theorem 1.2]{Abel-Ben-Zh}; (iii) $k=5$, except for $\l=1$, $v \in \{6, 10\}$, and possibly for $\l=1$ and $v \in \{15, 20\}$ \cite[Theorem 1.3]{Abel-Ben-Zh}. See \cite{Abel-Ben, Abel-Ben-Zh, Ben, Miao-Zhu} for more results on PMDs.

In \cite{Ben-Men} it was proved that a $(v, k, 1)$-RMD exists if there is an algebra of order $v$ in a certain quasi-variety, that a $(q, k, 1)$-RPMD exists whenever $q$ is a prime power with $q \equiv 1 \mod k$, and that a $(v, k, 1)$-RPMD exists for sufficiently large $v$ with $v \equiv 1 \mod k$. A $(v,3,1)$-RMD exists \cite{Ben-So, BGS} if and only if $v \equiv 0$ or $1 \mod 3$ and $v \ne 6$. A $(v,4,1)$-RMD exists \cite{Ben-Zh} if and only if $v \equiv 0, 1 \mod 4$ and $v \ne 4$ except possibly when $v = 12$. A $(v,4,1)$-RPMD exists \cite{Zh1} for all $v \equiv 0 \mod 4$ other than $v=4,8$ with at most 27 possible exceptions \cite{Zh}. A $(v, k, 1)$-RPMD exists \cite{Zh1} for all sufficiently large $v$ with $v \equiv 0 \mod k$. A $(v, 5, 1)$-RPMD with $v \equiv 1 \mod 5$ exists \cite{Abel-Ben-Ge} for all $v > 6$ except possibly $v=26$, and a $(v, 5, 1)$-RPMD with $v \equiv 0 \mod 5$ exists \cite{Abel-Ben-Ge1} for all $v \ge 215$ with two known exceptions plus at most 17 possible exceptions below this value. In \cite{Zh2} it was proved that for $\l > 1$ the necessary condition $v \equiv 0, 1 \mod 4$ is also sufficient for the existence of a $(v,4,\l)$-RPMD with the exception when $v=4$ and $\l$ is odd. 

An {\em automorphism} of an MD is a permutation of its point set that permutes its blocks among themselves. The (full) {\em automorphism group} of an MD is the group of all its automorphisms with operation the usual composition of permutations. In general, for a group $G$, an MD $(X, \BB)$ is said to {\em admit} $G$ as a group of automorphisms if $G$ acts (not necessarily faithfully) on the point set $X$ and preserves the block set $\BB$. An MD is said to be \emph{based on} $G$ \cite{BS, BH, NP, P} if its point set can be identified with $G$ in such a way that the MD admits the left regular representation of $G$ as a group of automorphisms. A useful construction, called the `difference method', for constructing MDs based on groups was discussed in \cite{BS} and further developed in \cite{BH, NP, P}. A similar construction was also developed in \cite{HK1} using the language of generalized complete mappings (and generalized Mendelsohn designs). In \cite{BH} all MDs admitting a one-dimensional affine group $\AGL(1,q)$ as a group of automorphisms were classified, and in \cite{NP} all MDs admitting the holomorph of a cyclic group were classified. In \cite[Section 5]{P} MDs based on a group $G$ that admit as automorphisms elements of a certain subgroup of the automorphism group $\Aut(G)$ of $G$ were studied. 

In this paper we give constructions of RMDs and RPMDs with $\l = 1$ using Frobenius groups. Our research was motivated by the first author's work \cite{HK, HK1} (with A. D. Keedwell) on generalized complete mappings and the second author's work \cite{TZ-1, TZ-2, TZ-3, Z}  (partly with A. Thomson) on Frobenius circulant graphs. We refined the methods in \cite{HK, HK1} (see the first version of this paper at \url{https://arxiv.org/abs/1307.7455v1}), but later we found that our constructions are also in line with the difference method \cite{BH, NP, P} as all RMDs and RPMDs obtained in our paper are based on the kernels of Frobenius groups. It turns out that Frobenius groups are proper choices because the Mendelsohn designs obtained from them are always resolvable. As mentioned above, much is known about the existence of MDs with $\l=1$ in the literature. The benefit of our results is that they give natural constructions of some RMDs and RPMDs with $\l = 1$ having the additional property that an automorphism group is regular on the point set.  

The main results in the paper are as follows. We first prove (see Theorem \ref{thm:frob}) that, if there exists a Frobenius group $K \rtimes H$ with Frobenius kernel $K$ and complement $H$ such that $v = |K|$ and $H$ contains an element $\phi$ of order $k \ge 2$, then a $(p(k)-1)$-fold perfect $(v, k, 1)$-RMD based on $K$ exists, where $p(k)$ is the smallest prime factor of $k$. In particular, this $(v, k, 1)$-RMD is a $(v, k, 1)$-RPMD when $k$ is a prime. Moreover, such a $(v, k, 1)$-RMD can be easily constructed from $\phi$ and the action of $H$ on $K$. This result enables us to construct various point-transitive RMDs and RPMDs systematically using Frobenius groups. To illustrate this method, we will use Theorem \ref{thm:frob} and a known result \cite{Boy} on Ferrero pairs to prove the existence and give an explicit construction of a $(v, k, 1)$-RPMD, for any integer with prime factorization $v = p_1^{e_1} \ldots p_t^{e_t}$ and any prime $k$ dividing every $p_i^{e_i} - 1$ (see Theorem \ref{cor:ferrero}). We will also use Theorem \ref{thm:frob} and a recent result from network design \cite{TZ-3} to construct $(v, k, 1)$-RMDs for any $v = p_1^{e_1} \ldots p_t^{e_t}$ and any even $k$ dividing every $p_i - 1$ (see Theorem \ref{thm:circ}). 

All results obtained in this paper can be stated in terms of regular orthomorphisms or complete mappings of groups, owing to the close connections between MDs and orthomorphisms and complete mappings of groups as shown in \cite[Theorem 5.1]{HK1} (see also Lemma \ref{le:md} and Remark \ref{rem:converse}). Beginning with \cite{Mann} and motivated by the study of Latin squares, there is a long history of studying complete mappings. Hall and Paige \cite{Hall-Paige} proved that a finite group with a nontrivial, cyclic Sylow 2-subgroup does not admit complete mappings. The converse, which was a long-standing conjecture \cite{Hall-Paige}, was proved in 2009 by Wilcox \cite{Wilcox}, Evans \cite{Evens1} and Bray (see \cite{Evens1}).

\section{Notation and terminology}
\label{sec:def} 

To make the paper self-contained we collect here some basic definitions on orthomorphisms, complete mappings and Frobenius groups. Undefined group-theoretic definitions can be found in \cite{Dixon-Mortimer, Goren}. 

Let $G$ be a group. A bijection $\theta: G \rightarrow G$ is called a {\em complete mapping} \cite{Mann, Hall-Paige} of $G$ if the mapping $\hat{\theta}$ defined by $\hat{\theta}(x) = x \theta(x)$ is also a bijection, and an {\em orthomorphism} \cite{Evens4} if the mapping $\bar{\theta}$ defined by $\bar{\theta}(x) = x^{-1} \theta(x)$ is also a bijection. Thus, $\theta$ is a complete mapping if and only if $\hat{\theta}$ is an orthomorphism, and $\theta$ is an orthomorphism if and only if $\bar{\theta}$ is a complete mapping. 
Obviously, if $\theta$ is a complete mapping or orthomorphism of $G$, then so is the bijection $x \mapsto \theta(x)a$ for every fixed $a \in G$. Hence we may require $\theta$ to fix $1_G$, and in this case $\theta$ is said to be in \emph{canonical form}. As a permutation of $G$, $\theta$ can be decomposed into a product of cycles. A complete mapping or orthomorphism $\theta$ in canonical form is {\em $k$-regular}
\cite{HK}, where $k \ge 2$, if all cycles in this decomposition other than the trivial cycle $(1_G)$ have length $k$. If in addition $\theta, \theta^2, \ldots, \theta^{\ell}$ are all $k$-regular, then $\theta$ is said to be {\em $\ell$-fold perfect} \cite[Definition 5.7]{HK1}, where $\ell$ is a positive integer less than $k$ and $\theta^i$ is the composition of $\theta$ with itself $i$ times. In particular, $\theta$ is called \emph{perfect} \cite{HK1} if it is $(k-1)$-fold perfect. 

An {\em action} of a group $G$ on a set $\Om$ is a mapping $G \times \Om \rightarrow \Om, (x, \a) \mapsto x(\a)$ such that $1_G(\a) = \a$ and $x(y(\a)) = (xy)(\a)$ for $\a \in \Om$ and $x, y \in G$, where $1_G$ is the identity element of $G$. Call $G(\a) := \{x(\a): x \in G\}$ the {\em $G$-orbit} containing $\a$, and $G_{\a} := \{x \in G: x(\a) = \a\}$ the {\em stabilizer} of $\a$ in $G$. We say that $G$ is {\em semiregular} on $\Om$ if $G_{\a} = \{1_G\}$ for all $\a \in \Om$, {\em transitive} on $\Om$ if $G(\a) = \Om$ for some (and hence all) $\a \in \Om$, and {\em regular} on $\Om$ if it is both transitive and semiregular on $\Om$. 

If a group $H$ acts on a group $K$ such that $x(uw) = x(u) x(w)$ for $x \in H$ and $u, w \in K$, then $H$ acts on $K$ as a group. This is equivalent to saying that the mapping defined by $x \mapsto \psi_x$ is a homomorphism from $H$ to $\Aut(K)$, where $\psi_x \in \Aut(K)$ is defined by $\psi_x(u) = x(u)$. In this case the semidirect product \cite{Dixon-Mortimer} of $K$ by $H$ with respect to the action, denoted by $K \rtimes H$, is the group whose elements are ordered pairs $(u, x)$, $u \in K$, $x \in H$, with operation defined by $(u_1, x_1)(u_2, x_2) = (u_1 x_1(u_2), x_1 x_2)$. If in addition $H$ is semiregular on $K \setminus \{1_K\}$, then $K \rtimes H$ is called a {\em Frobenius group} \cite{Dixon-Mortimer, Goren}. It is well known \cite{Dixon-Mortimer, Goren} that, for a finite Frobenius group $G = K \rtimes H$, the group $K$ is a nilpotent normal subgroup of $G$ called the \emph{Frobenius kernel} of $G$, and $|H|$ is a divisor of $|K| - 1$. Here $H$ is called a \emph{Frobenius complement} \cite{Dixon-Mortimer, Goren} of $K$ in $G$. 

We can also define a Frobenius group as a transitive group $G$ on a set $\Om$ that is not regular but has the property that $1_G$ is the only element of $G$ that fixes two points of $\Om$. The Frobenius kernel $K$ of $G$ then consists of $1_G$ and the elements of $G$ fixing no point of $\Om$, and the stabilizer $H$ in $G$ of a point of $\Om$ is a complement of $G$ (see e.g.~\cite[pp.86]{Dixon-Mortimer}). Since $K$ is regular on $\Om$, we may identify $\Om$ with $K$ in such a way that $K$ acts on itself by right multiplication, and we may choose $H$ to be the
stabilizer of the identity element of $K$ so that $H$ acts on $K$ by conjugation.   

If $K$ is a nontrivial group and $H$ a nontrivial fixed-point-free subgroup of $\Aut(K)$, then $K \rtimes H$ (with respect to the natural action of $H$ on $K$) is a Frobenius group with kernel $K$ and complement $H$. (An element $\phi \in \Aut(G)$ is fixed-point-free if $\phi(x) \ne x$ for every $x \in G \setminus \{1_G\}$, and a subgroup $H$ of $\Aut(G)$ is fixed-point-free if every non-identity element of $H$ is fixed-point-free.) When the operation of $K$ is written additively (but $K$ is not necessarily abelian), such a pair $(K, H)$ is called a {\em Ferrero pair} \cite{Boy} in the literature.

The {\em left regular representation} \cite{Goren} of a group $G$ is the permutation group $L(G) = \{\l(g): g \in G\}$, where $\l(g)$ is the permutation of $G$ defined by $\l(g): x \mapsto gx$, $x \in G$. This group acts regularly on $G$ in the obvious way and is isomorphic to $G$ when $\l(g)$ is identified with $g$.

\section{Resolvable Mendelsohn designs and regular orthomorphisms}
\label{sec:frob}

We use $p(k)$ to denote the smallest prime factor of an integer $k \ge 2$. As usual, for an element $\phi$ of a group, $\langle \phi \rangle$ denotes the cyclic subgroup generated by $\phi$. 

\begin{theorem}
\label{thm:frob} 
Let $v \ge 3$ and $k \ge 2$ be integers with $v \equiv 1 \mod k$ such that there exists a Frobenius group $K \rtimes H$ with $|K| = v$ and $H$ containing an element $\phi$ of order $k$.
Then a $(p(k)-1)$-fold perfect $(v, k, 1)$-RMD exists and can be constructed based on $K$. Moreover, this $(v, k, 1)$-RMD admits $K \rtimes \la \phi \ra$ as a group of automorphisms. In particular, if $k$ is a prime, then it is a $(v, k, 1)$-RPMD. Furthermore, the $(v, k, 1)$-RMDs constructed by using conjugate elements of $H$ with order $k$ are isomorphic to each other. 
\end{theorem}

In general, unfortunately, we do not know when two non-conjugate elements of $H$ with the same order produce isomorphic $(v, k, 1)$-RMDs.   

To prove Theorem \ref{thm:frob} we need the following two lemmas. 
 
\begin{lemma}
\label{thm:froben}
Let $K \rtimes H$ be a Frobenius group. Let $\phi$ be a non-identity element of $H$ with order $k$. Then $\phi$ gives rise to a $(p(k)-1)$-fold perfect $k$-regular orthomorphism of $K$ in canonical form. Moreover, $\phi$ gives rise to a perfect $k$-regular orthomorphism of $K$ if and only if $k$ is a prime.
\end{lemma}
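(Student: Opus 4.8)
The plan is to produce the required orthomorphism directly from $\phi$: take $\theta := \psi_\phi$, the automorphism of $K$ given by the action of $\phi$. The entire argument will run on a single fact guaranteed by the Frobenius hypothesis, namely that every non-identity element of $H$ acts on $K$ without nontrivial fixed points; this one property will deliver both the orthomorphism condition and the uniform cycle structure.

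First I would check that $\theta$ is an orthomorphism in canonical form. As an automorphism $\theta$ is a bijection and fixes $1_K$, so it is in canonical form, and it remains to show that $\bar\theta(u) = u^{-1}\phi(u)$ is a bijection. If $u^{-1}\phi(u) = w^{-1}\phi(w)$, then $wu^{-1} = \phi(w)\phi(u)^{-1} = \phi(wu^{-1})$, so $wu^{-1}$ is a fixed point of $\phi$; since $\phi$ is a non-identity element of $H$ this forces $wu^{-1} = 1_K$, i.e.\ $u = w$. Thus $\bar\theta$ is injective, hence bijective on the finite set $K$, and $\theta$ is an orthomorphism.

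Next I would verify $k$-regularity. Because $H$ acts faithfully on $K$ (semiregularity of $H$ on $K\setminus\{1_K\}$ rules out a nontrivial kernel of $x\mapsto\psi_x$), $\theta=\psi_\phi$ has order exactly $k$ and $\theta^j=\psi_{\phi^j}$ for all $j$. For any $u\neq 1_K$ the stabilizer of $u$ in $\langle\theta\rangle$ is trivial: if $\theta^j(u)=u$ with $0<j<k$, then the non-identity element $\phi^j$ would fix $u$, contradicting fixed-point-freeness. Hence every nontrivial orbit of $\langle\theta\rangle$ on $K$---equivalently, every nontrivial cycle of $\theta$---has length $k$, so $\theta$ is $k$-regular.

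It then remains to pin down the levels of perfection, and here the argument becomes elementary. Since $\theta$ is $k$-regular, powering a single $k$-cycle shows that every nontrivial cycle of $\theta^i$ has length $k/\gcd(i,k)$; thus $\theta^i$ is $k$-regular precisely when $\gcd(i,k)=1$. For $1\le i\le p(k)-1$ every prime factor of $i$ is smaller than $p(k)$ and so cannot divide $k$, giving $\gcd(i,k)=1$; therefore $\theta,\theta^2,\dots,\theta^{p(k)-1}$ are all $k$-regular (each being $\psi_{\phi^i}$ with $\phi^i\neq 1$, hence an orthomorphism by the argument above), and $\theta$ is $(p(k)-1)$-fold perfect. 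Finally $\theta$ is perfect, i.e.\ $(k-1)$-fold perfect, exactly when $\gcd(i,k)=1$ for every $1\le i\le k-1$, which holds if and only if $k$ is prime; when $k$ is composite the choice $i=p(k)<k$ gives $\gcd(i,k)=p(k)>1$, so $\theta^{p(k)}$ fails to be $k$-regular. I expect the only real content to lie in the two fixed-point-free arguments of the middle paragraphs---identifying $\theta=\psi_\phi$ and reading off from fixed-point-freeness both the orthomorphism property and the trivial stabilizers---after which the perfection claim is just bookkeeping on $\gcd(i,k)$.
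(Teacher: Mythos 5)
Your proposal is correct and takes essentially the same approach as the paper: the fixed-point-freeness of $\phi$ and of its coprime powers yields both the orthomorphism property (via the same computation showing $u^{-1}\phi(u)=w^{-1}\phi(w)$ forces $\phi(wu^{-1})=wu^{-1}$) and the $k$-regularity, after which the perfection levels are exactly the $\gcd(i,k)$ bookkeeping the paper also uses. The only cosmetic difference is that you prove the ``perfect iff $k$ prime'' direction contrapositively by exhibiting $i=p(k)$ for composite $k$, whereas the paper argues directly that perfection forces $ij\not\equiv 0 \bmod k$ for all $1\le i,j\le k-1$; these are the same observation.
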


\begin{proof}
Since $K \rtimes H$ is a Frobenius group, $H$ is semiregular on $K \setminus \{1\}$, where $1$ is the identity element of $K$. That is, for any $x \in K$, ${\phi}(x) = x$ implies $x = 1$. Thus, for $x, y \in K$, we have: $x^{-1} {\phi}(x) =  y^{-1} {\phi}(y)$ $\Leftrightarrow$ ${\phi}(x) {\phi}(y)^{-1} =  xy^{-1}$ $\Leftrightarrow$ ${\phi}(x) {\phi}(y^{-1}) =  xy^{-1}$ $\Leftrightarrow$ ${\phi}(xy^{-1}) = xy^{-1}$ $\Leftrightarrow$ $xy^{-1} = 1$ $\Leftrightarrow$ $x = y$. In other words, the mapping from $K$ to $K$ defined by $\bar{\phi}(x) = x^{-1} {\phi}(x)$ is injective and so must be bijective as $K$ is finite. Note that ${\phi}$ fixes $1$. Therefore, ${\phi}$ is an orthomorphism of $K$ in canonical form. 

Since ${\phi}$ has order $k$, we have ${\phi}^j \ne 1_{H}$ for $1 \le j \le k-1$. Since $H$ is semiregular on $K \setminus \{1\}$, it follows that ${\phi}^{j}(x) \ne x$ for every $x \in K \setminus \{1\}$. Thus $(x, {\phi}(x), {\phi}^2(x), \ldots, {\phi}^{k-1}(x))$ is a cycle in the decomposition of the permutation ${\phi}$ of $K$ into disjoint cycles. This implies that every nontrivial cycle in the cycle decomposition of ${\phi}$ has length $k$. Therefore, ${\phi}$ is a $k$-regular orthomorphism of $K$. 

We prove further that ${\phi}$ is $(p(k)-1)$-fold perfect. Fix $i$ with $1 \le i \le p(k)-1$. Since $p(k)$ is the smallest prime factor of $k$, $i$ and $k$ are coprime. Hence the order of ${\phi}^i$ is equal to $k$. It follows from what we proved above that ${\phi}^i$ is a $k$-regular orthomorphism of $K$. Since this holds for $i = 1, \ldots, p(k)-1$, we conclude that ${\phi}$ is a $(p(k)-1)$-fold perfect $k$-regular orthomorphism of $K$. 

In the special case when $k$ is a prime, we have $p(k) = k$ and therefore ${\phi}$ is a perfect $k$-regular orthomorphism of $K$. Conversely, suppose ${\phi}$ is a perfect $k$-regular orthomorphism of $K$. Then $\phi, \phi^2, \ldots, \phi^{k-1}$ are all $k$-regular orthomorphisms of $G$. Thus, for any $1 \le i, j \le k-1$ and any $x \in K \setminus \{1\}$, we have $\phi^{ij}(x) \ne x$ and so $ij$ cannot be a multiple of $k$. Therefore, $k$ must be a prime. 
\qed
\end{proof} 
 
The following result is similar to its counterpart for complete mappings (see Theorems 5.1, 5.3 and 5.4 in \cite{HK1}), and the proof is similar to that of \cite[Theorems 5.1]{HK1}. We give its proof for the completeness of the paper. 

\begin{lemma}
\label{le:md}
Let $G$ be a group of order $v$ that admits an $\ell$-fold perfect $k$-regular orthomorphism $\theta$ (in canonical form) with cycle decomposition $(g_{11}, g_{12}, \ldots, g_{1k}) \ldots (g_{r1}, g_{r2}, \ldots, g_{rk})$, where $k \ge 2$ and $rk = v-1$. Let 
$$
{\cal B} = \cup_{g \in G} {\cal B}_g,
$$
where 
$$
{\cal B}_g = \{(gg_{11},\, gg_{12},\, \ldots,\, gg_{1k}), \ldots, (gg_{r1},\, gg_{r2},\, \ldots,\, gg_{rk})\}
$$
with each block $(gg_{i1},\, gg_{i2},\, \ldots,\, gg_{ik})$ equipped with the cyclic order $gg_{i1} < gg_{i2} < \cdots < gg_{ik} < gg_{i1}$. Then $(G, {\cal B})$ is an $\ell$-fold perfect $(v, k, 1)$-RMD with point set $G$. Moreover, $(G, {\cal B})$ admits the left regular representation of $G$ as a group of automorphisms. 
\end{lemma}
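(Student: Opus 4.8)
The plan is to verify the three claims in Lemma~\ref{le:md} in turn: that $(G, {\cal B})$ is a $(v,k,1)$-MD, that it is resolvable, that it is $\ell$-fold perfect, and finally that it admits $L_G$ as automorphisms. First I would count blocks and set up notation. Since $\theta$ is $k$-regular with $rk = v-1$, each ${\cal B}_g$ consists of exactly $r$ blocks, so $|{\cal B}| = vr = v(v-1)/k$, matching the required number of blocks for a $(v,k,1)$-MD. The key structural fact to record is that, because each cycle has the form $(g_{i1}, \theta(g_{i1}), \ldots, \theta^{k-1}(g_{i1}))$, consecutive entries within a block satisfy $g_{i,j+1} = \theta(g_{ij})$ (indices mod $k$), and after left-translation by $g$ the consecutive entries of a block of ${\cal B}_g$ are $gg_{ij}$ and $gg_{i,j+1} = g\theta(g_{ij})$.

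Next I would prove the $\l=1$ covering property: every ordered pair $(a,b)$ of distinct points of $G$ is consecutive in exactly one block. Given $(a,b)$, an ordered consecutive pair in some block of ${\cal B}_g$ must have the form $(gg_{ij},\, g\theta(g_{ij}))$, so I need to show there is a unique choice of $g$ and of a cycle element $g_{ij}$ with $gg_{ij}=a$ and $g\theta(g_{ij})=b$. Writing $h = g_{ij}$, these two equations give $gh=a$ and $g\theta(h)=b$, whence $h^{-1}\theta(h) = a^{-1}b$; this is exactly the value of the orthomorphism-associated bijection $\bar\theta(h)=h^{-1}\theta(h)$. The crucial point is that $\bar\theta$ is a bijection on $G$ (this is the defining property of an orthomorphism, established in the orthomorphism definition and used in Lemma~\ref{thm:froben}), so $a^{-1}b \neq 1$ determines $h$ uniquely, and then $g = ah^{-1}$ is forced. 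I would also note $h \neq 1$ since $a \neq b$, so $h$ lies in a genuine (length-$k$) cycle and indeed appears as some $g_{ij}$. This gives existence and uniqueness simultaneously.

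Then I would establish resolvability. Since $v \equiv 1 \bmod k$ here, I must partition ${\cal B}$ into $v$ parts each containing $(v-1)/k = r$ pairwise disjoint blocks. The natural candidate is to take the parts to be the sets ${\cal B}_g$, $g \in G$: each ${\cal B}_g$ has $r$ blocks, and the blocks within a fixed ${\cal B}_g$ are the left-translates by $g$ of the disjoint cycles of $\theta$, hence pairwise disjoint and together covering $G \setminus \{g\}$ (the identity cycle $(1_G)$ is fixed, so $g\cdot 1_G = g$ is the single omitted point). I would confirm there are exactly $v$ such parts. For the $\ell$-fold perfect claim, I would generalize the covering argument to $t$-apart pairs for $1 \le t \le \ell$: two points $gg_{ij}$ and $gg_{i,j+t}$ that are $t$-apart satisfy $gg_{i,j+t} = g\theta^{t}(g_{ij})$, so the same bijectivity argument applies with $\bar\theta$ replaced by $x \mapsto x^{-1}\theta^{t}(x)$; this map is a bijection precisely because $\theta^{t}$ is itself a $k$-regular orthomorphism for $t \le \ell$, which is what $\ell$-fold perfectness of $\theta$ provides. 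Finally, for the automorphism claim, I would observe that $L_{g_0}$ sends the block $(gg_{i1}, \ldots, gg_{ik})$ to $(g_0 g g_{i1}, \ldots, g_0 g g_{ik})$, which is a block of ${\cal B}_{g_0 g}$; thus $L_{g_0}$ permutes ${\cal B}$ among itself, so $L_G \le \Aut(G, {\cal B})$.

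The routine parts are the counting and the automorphism verification. The step requiring the most care is the $\ell$-fold perfect covering argument, since I must be precise that $t$-apart for different $t$ uses a different associated bijection $x \mapsto x^{-1}\theta^t(x)$, and that this is a bijection exactly when $\theta^t$ is an orthomorphism; the hypothesis that $\theta$ is $\ell$-fold perfect (so each of $\theta, \ldots, \theta^{\ell}$ is a $k$-regular orthomorphism) is precisely what licenses the range $1 \le t \le \ell$. A minor bookkeeping subtlety, worth stating carefully, is that indices $j+t$ are read modulo $k$ so that wrap-around around a single cycle is handled correctly, and that distinct ordered pairs never coincide because $a^{-1}b \ne 1$ pins down a unique nontrivial cycle element $h$.
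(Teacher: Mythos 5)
Your proposal is correct and follows essentially the same route as the paper: both reduce the ``$t$-apart in exactly one block'' condition to the bijectivity of $x \mapsto x^{-1}\theta^t(x)$ (forcing a unique cycle element $h$ and then $g = ah^{-1}$), and both obtain resolvability from the partition $\{{\cal B}_g : g \in G\}$. The only differences are presentational: you treat $t=1$ first and then generalize, add an unneeded block count, and explicitly verify the $L_G$-automorphism claim, which the paper's proof leaves implicit.
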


\begin{proof}
By our assumption, $\theta$ is defined by $\theta(1_G) = 1_G$ and $\theta(g_{ij}) = g_{i, j+1}$, $1 \le i \le r, 1 \le j \le k$, with the second subscripts modulo $k$. In general, for $1 \le t \le \ell$, $\theta^t(1) = 1$ and $\theta^t(g_{ij}) = g_{i, j+t}$. 

Fix $t$ with $1 \le t \le \ell$. Let $(x, y)$ be an arbitrary pair of distinct elements of $G$, so that $x^{-1} y \ne 1$. We have $(x, y) = (g g_{ij}, g g_{i, j+t})$ if and only if $x^{-1} y = g_{ij}^{-1} g_{i, j+t}$ and $g = xg_{ij}^{-1}$. However, since by our assumption $\theta^t$ is an orthomorphism of $G$, there is exactly one pair $(i, j)$ satisfying the first equation. Therefore, the ordered pair of elements $(x, y)$ are $t$-apart in exactly one block of $\cal B$. Since this holds for any $(x, y)$ and any $t$ with $1 \le t \le \ell$, it follows that $(G, {\cal B})$ is an $\ell$-fold perfect $(v, k, 1)$-MD with point set $G$. Moreover, $(G, {\cal B})$ is resolvable because of the obvious partition $\{{\cal B}_g: g \in G\}$ of $\cal B$ into $v$ parts each containing $(v-1)/k$ pairwise disjoint blocks. Clearly, $(G, {\cal B})$ admits the left regular representation of $G$ as a group of automorphisms. 
\qed
\end{proof}

\begin{remark}
\label{rem:converse}
{\em (a) Similar to \cite[Theorem 5.1]{HK1}, the converse of Lemma \ref{le:md} is also true. That is, from an $\ell$-fold perfect $(v, k, 1)$-RMD on $G$ that admits $L(G)$ as a group of automorphisms we can recover an $\ell$-fold perfect $k$-regular orthomorphism. 

(b) In Lemma \ref{le:md}, ${\cal B}_1 =  \{(g_{11},\, g_{12},\, \ldots,\, g_{1k}), \ldots, (g_{r1},\, g_{r2},\, \ldots,\, g_{rk})\}$ is a {\em basis}, and its blocks are {\em base blocks}, of the $(v, k, 1)$-RMD in the sense that all other blocks are obtained from them by applying $L(G)$. This fact is usually expressed \cite{Ben} as ${\cal B} = dev ({\cal B}_1)$.}
\end{remark}

\begin{proof}{\bf of Theorem \ref{thm:frob}}~
Let $v, k, K \rtimes H$ and $\phi$ be as in Theorem \ref{thm:frob}. 
By Lemma \ref{thm:froben}, $\phi$ gives rise to a $(p(k)-1)$-fold perfect $k$-regular orthomorphism of $K$ in canonical form, whose cycles in the cycle decomposition are $B(x) = (x, {\phi}(x), {\phi}^2(x), \ldots, {\phi}^{k-1}(x))$, $x \in K \setminus \{1\}$, where $1$ is the identity element of $K$. Note that $B(x) = B(y)$ as sets if and only if $y =\phi^i(x)$ for some $i$. Regard $B(x)$ as a block with cyclic order $x < {\phi}(x) < {\phi}^2(x) < \cdots < {\phi}^{k-1}(x) < x$, and define ${\cal B}_1 (\phi) = \{B(x): x \in K \setminus \{1\}\}$ with duplicated blocks counted only once. More explicitly, letting the $\la \phi \ra$-orbits on $K \setminus \{1\}$  be $\la \phi \ra(x_1), \ldots, \la \phi \ra(x_r)$, where $x_1, \ldots, x_r \in K \setminus \{1\}$ and $r = (v-1)/k$, we have 
$$
{\cal B}_1 (\phi) = \{B(x_1), \ldots, B(x_r)\}. 
$$
Define 
$$
{\cal B}(\phi) = dev ({\cal B}_1(\phi)) = \cup_{g \in K} {\cal B}_g (\phi),
$$
where 
$$
{\cal B}_g (\phi) = g {\cal B}_1 (\phi) = \{gB(x_1), \ldots, gB(x_r)\},
$$ 
where
$$
gB(x_i) = (gx_i, g{\phi}(x_i), g{\phi}^2(x_i), \ldots, g{\phi}^{k-1}(x_i)),\;\, i = 1, \ldots, r.
$$ 
By Lemma \ref{le:md}, $(K, {\cal B}(\phi))$ is a $(p(k)-1)$-fold perfect $(v, k, 1)$-RMD. Since $\la \phi \ra$ leaves each block of ${\cal B}_1(\phi)$ invariant and permutes its elements cyclically, one can verify that $(K, {\cal B}(\phi))$ admits $\la \phi \ra$ as a group of automorphisms. Since by Lemma \ref{le:md}, $(K, {\cal B}(\phi))$ also admits $L(K)$ as a group of automorphisms, it admits $K \rtimes \la \phi \ra$ as a group of automorphisms (with $K$ identified with $L(K)$).   

If $k$ is a prime, then by Lemma \ref{thm:froben}, $\phi$ gives rise to a perfect $k$-regular orthomorphism of $K$, and hence $(K, {\cal B}(\phi))$ is a $(v, k, 1)$-RPMD by Lemma \ref{le:md}. 

Finally, assume that $\phi' = \psi^{-1} \phi \psi$ is a conjugate of $\phi$, where $\psi \in H$. Then a typical base block in ${\cal B}_1 (\phi')$ is $(x, \psi^{-1} \phi \psi(x), \ldots, \psi^{-1} \phi^{k-1} \psi (x))$, which can be expressed as 
$$
(\psi^{-1} (y), \psi^{-1} \phi (y), \ldots, \psi^{-1} \phi^{k-1} (y)) = \psi^{-1} (y, \phi (y), \ldots, \phi^{k-1} (y)),
$$ 
where we set $y = \psi(x)$. From this one can verify that $\psi$ (viewed as a bijection of $K$) is an isomorphism from $(K, {\cal B}(\phi))$ to $(K, {\cal B}(\phi'))$. 
\qed
\end{proof}

In Theorem \ref{thm:frob} and Lemma \ref{thm:froben} we essentially dealt with the Frobenius group $K \rtimes \langle \phi \rangle$. Thus we have the following corollary of Theorem \ref{thm:frob}.

\begin{corollary}
\label{cor:frob}
Let $v \ge 3$ and $k \ge 2$ be integers such that there exists a group $K$ with order $v$ that admits a fixed-point-free automorphism $\phi$ of order $k$. Then there exists a $(p(k)-1)$-fold perfect $(v, k, 1)$-RMD based on $K$ that admits $K \rtimes \la \phi \ra$ as a group of automorphisms. If in addition $k$ is a prime, then this $(v, k, 1)$-RMD is a $(v, k, 1)$-RPMD.
\end{corollary}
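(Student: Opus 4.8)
The plan is to reduce the statement directly to Theorem \ref{thm:frob} by manufacturing an appropriate Frobenius group out of $\phi$ alone. The natural candidate is $H = \langle \phi \rangle$, the cyclic subgroup of $\Aut(K)$ generated by $\phi$. Since $\phi$ is fixed-point-free it is non-identity, so $\langle\phi\rangle$ is nontrivial and contains $\phi$ as an element of order $k \ge 2$; and $|K| = v \ge 3$ by hypothesis. If I can show that $K \rtimes \langle \phi \rangle$ (with respect to the natural action of $\langle\phi\rangle$ on $K$) is a finite Frobenius group with kernel $K$ and complement $\langle\phi\rangle$, then Theorem \ref{thm:frob} applies verbatim, producing a $(p(k)-1)$-fold perfect $(v,k,1)$-RMD that admits $K \rtimes \langle\phi\rangle$ as a group of automorphisms, and a $(v,k,1)$-RPMD when $k$ is prime. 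Note that once $K \rtimes \langle\phi\rangle$ is known to be Frobenius, the congruence $v \equiv 1 \bmod k$ demanded by Theorem \ref{thm:frob} comes for free, since in any finite Frobenius group the order $k$ of the complement divides $|K| - 1 = v - 1$.

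By the fact recalled in Section \ref{sec:def}, to conclude that $K \rtimes \langle\phi\rangle$ is a Frobenius group with the stated kernel and complement it suffices to verify that $\langle\phi\rangle$ is a nontrivial fixed-point-free subgroup of $\Aut(K)$, that is, that every non-identity power $\phi^i$ with $1 \le i \le k-1$ is itself a fixed-point-free automorphism. This is the only substantive point, and I expect it to be the main obstacle, because the hypothesis literally supplies the fixed-point-freeness of $\phi$ alone, not of its powers.

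The observation that handles the powers coprime to $k$ is that fixed-point-freeness is inherited along generators of a common cyclic group: if $\gcd(i,k) = 1$ then $\phi^i$ has order $k$, so $\langle \phi^i \rangle = \langle \phi \rangle$ and hence $\phi = (\phi^i)^m$ for some $m$; consequently any $x \in K$ fixed by $\phi^i$ is fixed by $\phi$, forcing $x = 1$. Thus $\phi^i$ is fixed-point-free whenever $i$ is coprime to $k$. In particular, when $k$ is prime every non-identity power $\phi^i$ ($1 \le i \le k-1$) is coprime to $k$, so $\langle\phi\rangle$ is automatically fixed-point-free and the sharper RPMD conclusion follows at once from Theorem \ref{thm:frob}. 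For composite $k$, the powers $\phi^i$ with $\gcd(i,k)>1$ are not reached by this argument (equivalently, $\phi$ need not be $k$-regular), so there the fixed-point-freeness of the whole group $\langle\phi\rangle$ must be taken as part of the hypothesis, as is implicit here, or be verified separately. Granting this, the reduction to Theorem \ref{thm:frob} goes through unchanged, which completes the proof.
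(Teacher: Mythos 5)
Your reduction is the same one the paper uses: the paper offers no proof of Corollary \ref{cor:frob} beyond the remark preceding it, namely that Theorem \ref{thm:frob} and Lemma \ref{thm:froben} ``essentially dealt with the Frobenius group $K \rtimes \la \phi \ra$''. In other words, the paper silently passes from fixed-point-freeness of the \emph{element} $\phi$ to fixed-point-freeness of the \emph{subgroup} $\la \phi \ra$, which (by the fact recalled in Section \ref{sec:def}) is what makes $K \rtimes \la \phi \ra$ a Frobenius group. That passage is exactly the point you isolate, and your treatment of it is more careful than the paper's: you also correctly note that $v \equiv 1 \pmod k$ then comes for free, since the order of a Frobenius complement divides $|K|-1$.

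Your hedge for composite $k$ is not excess caution; under the paper's own definitions the corollary as literally stated is false. Take $K = \ZZZ_{15}$ and $\phi \colon z \mapsto 2z$. Then $\phi$ is a fixed-point-free automorphism of order $k = 4$, but $\phi^2 \colon z \mapsto 4z$ fixes $z = 5$, so $\la \phi \ra$ is not a fixed-point-free subgroup and $K \rtimes \la \phi \ra$ is not Frobenius. The conclusion fails as well, and badly: since $4 \nmid 15 \cdot 14$, no $(15,4,1)$-MD exists at all (indeed $15 \not\equiv 0, 1 \pmod 4$), resolvable or otherwise. So for composite $k$ the hypothesis must indeed be strengthened to ``every non-identity power of $\phi$ is fixed-point-free'', i.e.\ $(K, \la \phi \ra)$ is a Ferrero pair, exactly the reading you adopt. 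Your coprimality argument ($\gcd(i,k)=1$ implies $\la \phi^i \ra = \la \phi \ra$, so any fixed point of $\phi^i$ is fixed by $\phi$) is correct and completely settles the prime case, which is the case the RPMD claim concerns; this argument appears nowhere in the paper. In short: you follow the same route as the paper, your proof is fully rigorous when $k$ is prime, and the one step you flag as requiring an extra hypothesis genuinely does require it --- you have uncovered a gap in the paper's statement rather than left one in your proof.
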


The assumption in this corollary (as well as the next one) implies $v \equiv 1 \mod k$. The well-known Cauchy's theorem in group theory asserts that for any group $H$ and any prime divisor $k$ of $|H|$, $H$ contains an element of order $k$. Combining this with Theorem \ref{thm:frob} we obtain the following result. 

\begin{corollary}
\label{cor:frob1}
Let $v \ge 3$ be an integer and $k \ge 2$ a prime such that there exists a Frobenius group $K \rtimes H$ with $|K| = v$ and with $k$ dividing $|H|$. Then there exists a $(v, k, 1)$-RPMD based on $K$ that admits $K \rtimes \la \phi \ra$ as a group of automorphisms, where $\phi$ is an element of $H$ with order $k$.
\end{corollary}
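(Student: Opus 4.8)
The plan is to obtain the required element $\phi$ from Cauchy's theorem and then feed it directly into Theorem \ref{thm:frob}, so the only genuine work is to confirm that every hypothesis of that theorem is met. First I would invoke Cauchy's theorem: since $k$ is a prime dividing $|H|$, the complement $H$ contains an element $\phi$ of order $k$. This produces exactly the element named in the statement and is the element with respect to which the automorphism group $K \rtimes \la \phi \ra$ is formed.

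Next I would verify the congruence $v \equiv 1 \mod k$, which is the one hypothesis of Theorem \ref{thm:frob} not handed to us directly. Here I would use the standard structural fact, already recalled in Section \ref{sec:def}, that for a finite Frobenius group $K \rtimes H$ the order $|H|$ divides $|K| - 1 = v - 1$. Since $k$ divides $|H|$ and $|H|$ divides $v - 1$, transitivity of divisibility gives $k \mid v - 1$, i.e. $v \equiv 1 \mod k$. The remaining hypotheses are immediate: $k \ge 2$ because $k$ is prime, and $v \ge 3$ is assumed in the statement.

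With all hypotheses in place, I would apply Theorem \ref{thm:frob} to the Frobenius group $K \rtimes H$ together with the element $\phi$ of order $k$. The theorem then yields a $(p(k)-1)$-fold perfect $(v, k, 1)$-RMD admitting $K \rtimes \la \phi \ra$ as a group of automorphisms; and since $k$ is prime, the ``in particular'' clause of the same theorem upgrades this to a $(v, k, 1)$-RPMD, which is precisely the assertion.

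There is essentially no hard step: the result is a direct consequence of Cauchy's theorem combined with Theorem \ref{thm:frob}. The only point that warrants any attention is the divisibility check in the second paragraph, and even that follows at once from the basic theory of Frobenius groups quoted earlier in the paper. Accordingly, I expect the entire argument to occupy only a few lines.
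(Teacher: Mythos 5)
Your proof is correct and is exactly the paper's argument: the paper derives this corollary by combining Cauchy's theorem with Theorem \ref{thm:frob}, just as you do. Your explicit verification that $v \equiv 1 \bmod k$ (via $k \mid |H|$ and $|H| \mid |K|-1$) is a detail the paper leaves implicit, and it is a welcome addition rather than a deviation.
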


Applying Theorem \ref{thm:frob}, Corollary \ref{cor:frob} or Corollary \ref{cor:frob1} to various Frobenius groups, we can obtain point-transitive Mendelsohn designs of specific parameters.  
In the special case when the group is $\AGL(1, q)$, we obtain the following well-known construction (see e.g. \cite{Ben, Miao-Zhu, Yin}) which is included for illustration only. (A related result that  generalizes \cite[Theorem 2.3]{BH} is the classification \cite[Theorem 5.4]{P} of all $(q, k, 1)$-MDs based on $(\FFF_q, +)$ admitting a subgroup of $(\FFF^*_q, \cdot)$ as a group of automorphisms.)

\begin{example}
\label{ex:classic}
{\em
$\AGL(1, q)$ consists of all affine transformations $t_{\a, \b}: \xi \mapsto \a \xi + \b$ of $\FFF_q$, $\a \in \FFF_q^*, \b \in \FFF_q$, where $q$ is a prime power. It is well known that $\AGL(1, q) = K \rtimes H$ is sharply 2-transitive \cite{Dixon-Mortimer} on $\FFF_q$ and hence is a Frobenius group, where $K = \{t_{1, \b}: \b \in \FFF_q\} \cong (\FFF_q, +)$ and $H = \{t_{\a, 0}: \a \in \FFF^*_q\} \cong (\FFF^*_q, \cdot)$. Since $H$ is a cyclic group of order $q-1$, for every divisor $k \ge 2$ of $q-1$, $H$ has a unique element $\phi = t_{\a,0}$ of order $k$, where $\a = \om^{r}$ with $\om$ a primitive element of $\FFF_q$ and $r = (q-1)/k$. Moreover, $\la \phi \ra$ is isomorphic to the subgroup $\la \a \ra$ of $\FFF_q^*$, and so we may identify these two cyclic groups. By Theorem  \ref{thm:frob}, a $(q, k, 1)$-RMD $(K, {\cal B} (\phi))$ exists and can be constructed explicitly. A typical base block in ${\cal B}_1(\phi)$ is of the form $(x, x \a, \ldots, x \a^{k-1})$. Hence ${\cal B}_1 (\phi) = \{(1, \om^r, \ldots, \om^{r(k-1)}), \ldots, (\om^{r-1}, \om^{2r-1}, \ldots, \om^{r(k-1)+(r-1)})\}$ and so the blocks of ${\cal B} (\phi)$ are 
$$
(\om^i, \om^{i+r}, \ldots, \om^{i+r(k-1)}), \ldots, (\om^{i+r-1}, \om^{i+2r-1}, \ldots, \om^{i+r(k-1)+(r-1)}), \; i = 0, 1, \ldots, r.
$$ 
In the case when $k$ is a prime factor of $q-1$, $(K, {\cal B} (\phi))$ is a $(q, k, 1)$-RPMD by Theorem \ref{thm:frob}. 
}
\end{example}

In \cite[Theorem 1]{Boy}, Boykett proved the following: Let $v = p_{1}^{e_1} \ldots p_{t}^{e_t}$ (prime factorization) and $k$ be positive integers. Then there exists a Ferrero pair $(K, H)$ such that $|K| = v$ and $|H| = k$ if and only if $k$ divides $p_{i}^{e_i} - 1$ for $1 \le i \le t$. Using this, we obtain the following special case of Theorem \ref{thm:frob}, where $\gcd$ stands for the greatest common divisor. 

\begin{theorem}
\label{cor:ferrero}
Let $v = p_{1}^{e_1} \ldots p_{t}^{e_t} \ge 3$ be an integer in prime factorization, and let $k$ be a prime factor of $\gcd(p_{1}^{e_1} - 1, \ldots, p_{t}^{e_t} - 1)$. Then there exists a $(v, k, 1)$-RPMD  that admits a Frobenius group $K \rtimes H$ with $|K|=v$ and $|H|=k$ as a group of automorphisms.  
\end{theorem}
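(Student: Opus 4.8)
The plan is to derive this result as a direct composition of Boykett's theorem on Ferrero pairs with Theorem \ref{thm:frob}; essentially all the real work has already been carried out, so the argument amounts to checking that the hypotheses line up. First I would unpack the divisibility condition. To say that $k$ is a prime factor of $\gcd(p_{1}^{e_1} - 1, \ldots, p_{t}^{e_t} - 1)$ is precisely to say that $k$ is prime and $k$ divides $p_{i}^{e_i} - 1$ for every $i$ with $1 \le i \le t$. This is exactly the condition in \cite[Theorem 1]{Boy}, so by that theorem there exists a Ferrero pair $(K, H)$ with $|K| = v$ and $|H| = k$.

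Next I would translate the Ferrero pair into the input required by Theorem \ref{thm:frob}. By the definition recalled in Section \ref{sec:def}, a Ferrero pair $(K, H)$ consists of a nontrivial group $K$ together with a nontrivial fixed-point-free subgroup $H \le \Aut(K)$, and then $K \rtimes H$ is a Frobenius group with kernel $K$ and complement $H$. Because $k$ is prime, the complement $H$ of order $k$ is cyclic; choosing any generator $\phi$ of $H$ produces an element of $H$ of order $k$, and $H = \la \phi \ra$. Moreover $v \equiv 1 \mod k$, since each factor satisfies $p_{i}^{e_i} \equiv 1 \mod k$ and hence $v = \prod_i p_{i}^{e_i} \equiv 1 \mod k$; together with $v \ge 3$ this confirms the numerical hypotheses of Theorem \ref{thm:frob}.

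Finally I would invoke Theorem \ref{thm:frob} with the Frobenius group $K \rtimes H = K \rtimes \la \phi \ra$ and the element $\phi$ of order $k$. The theorem yields a $(v, k, 1)$-RMD admitting $K \rtimes \la \phi \ra$ as a group of automorphisms, and because $k$ is prime its final clause upgrades this to a $(v, k, 1)$-RPMD. Since $\la \phi \ra = H$, the admitted group is exactly $K \rtimes H$ with $|K| = v$ and $|H| = k$, as required. I do not expect any genuine obstacle here, as the proof is purely a matter of chaining two established results; the only point deserving a moment's care is the primality of $k$, which is used twice—once to guarantee that the Frobenius complement of prime order is cyclic (hence contains an element of order $k$), and once to ensure through Theorem \ref{thm:frob} that the design is perfect rather than merely $(p(k)-1)$-fold perfect.
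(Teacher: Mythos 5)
Your proposal is correct and follows essentially the same route as the paper's own proof: invoke Boykett's theorem to obtain a Ferrero pair $(K,H)$ with $|K|=v$ and $|H|=k$, observe that primality of $k$ makes $H$ cyclic so that it contains an element of order $k$, and then apply Theorem~\ref{thm:frob} with its prime-$k$ clause to get the RPMD. Your extra verification that $v \equiv 1 \bmod k$ is a nice touch of care (the paper leaves it implicit, as it also follows from $|H|$ dividing $|K|-1$ for any finite Frobenius group), but the argument is otherwise identical.
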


\begin{proof}
Since $k$ divides $\gcd(p_{1}^{e_1} - 1, \ldots, p_{t}^{e_t} - 1)$, by the result of Boykett mentioned above there exists a Ferrero pair $(K, H)$ with $|K| = v$ and $|H| = k$. Since $k$ is a prime, $H$ is a cyclic group and every non-identity element of it has order $k$. The result follows from Theorem \ref{thm:frob}.
\qed
\end{proof}

Theorem \ref{cor:ferrero} is in the same spirit as the following known results: 
\begin{itemize}
\item[(i)] a $(v, k, 1)$-RPMD exists \cite{Ben-Men} for any prime power $v$ and any divisor $k \ge 2$ of $v-1$; 
\item[(ii)] a $(v, k, 1)$-PMD exists \cite[Corollary 2.5]{Mend} for any integer $v = p_{1}^{e_1} \ldots p_{t}^{e_t} \ge 3$ and any divisor $k \ge 2$ of $\gcd(p_{1}^{e_1} - 1, \ldots, p_{t}^{e_t} - 1)$. 
\end{itemize}
Nevertheless, we remark that neither of these results implies Theorem \ref{cor:ferrero}. 

\begin{construction}
\label{con:ferrero}
{\em Following the line of the proof of Theorem \ref{thm:frob}, we can construct the $(v, k, 1)$-RPMD in Theorem \ref{cor:ferrero} explicitly. Denote $q_i = p_{i}^{e_i}$ and let $K = \oplus_{i=1}^t \FFF_{q_i}$ be the direct sum of the additive groups of the finite fields $\FFF_{q_i}$. Since $k$ divides $q_i - 1$, $r_i = (q_i - 1)/k$ is an integer for each $i$. Let $\om_i$ be a primitive element of $\FFF_{q_i}$ and $H_i = \la \om_{i}^{r_i} \ra$ be the subgroup of $\FFF^*_{q_i}$ with order $k$. We denote a copy of $H_1 \cong \cdots \cong H_t$ by $H = \la \om \ra$. Then $H$ acts on $K$ by $\om^j (x_1, \ldots, x_t) = (\om_1^{j r_1} x_1, \ldots, \om_t^{j r_t} x_t)$, for $(x_1, \ldots, x_t) \in K$ and $\om^j \in H$. It can be verified that $H$ acts fixed-point-freely on $K$ as a group, and $H$ is isomorphic to a subgroup of $\Aut(K)$. Thus $(K, H)$ is a Ferrero pair with $|K| = v$ and $|H| = k$. Since $k$ is a prime, every non-identity element of $H$ has order $k$. For $i = 1, \ldots, k-1$, the $(v, k, 1)$-RPMD $(K, {\cal B}(\om^i))$ obtained from $\om^i$ is as follows: the basis ${\cal B}_{\bf 0}(\om)$ consists of blocks of the form $B(x_1, \ldots, x_t) = ((x_1, \ldots, x_t), (\om_1^{i r_1} x_1, \ldots, \om_t^{i r_t} x_t), \ldots, (\om_1^{(k-1)i r_1} x_1, \ldots, \om_t^{(k-1)i r_t} x_t))$, $(x_1, \ldots, x_t) \in K \setminus \{(0, \ldots, 0)\}$. We have ${\cal B}(\om) = \cup_{(y_1, \ldots, y_t) \in K} ({\cal B}_{\bf 0}(\om) + (y_1, \ldots, y_t))$, where ${\cal B}_{\bf 0}(\om) + (y_1, \ldots, y_t)$ consists of all blocks $B(x_1, \ldots, x_t) + (y_1, \ldots, y_t) = ((x_1 + y_1, \ldots, x_t + y_t), (\om_1^{i r_1} x_1 + y_1, \ldots, \om_t^{i r_t} x_t + y_t), \ldots, (\om_1^{(k-1)i r_1} x_1 + y_1, \ldots, \om_t^{(k-1)i r_t} x_t + y_t))$.}
\end{construction}

There has been extensive research on the existence of $(v, k, \l)$-PMDs for a fixed (especially small) integer $k$ (see \cite{Ben} for a survey). Theorem \ref{cor:ferrero} asserts that, for a fixed prime $k$, a $(p_{1}^{e_1} \ldots p_{t}^{e_t}, k, 1)$-RPMD exists for any prime powers $p_{i}^{e_i}$ as long as $k$ divides all $p_{i}^{e_i} - 1$, $1 \le i \le t$. In particular, for any primes $p_1, \ldots, p_t \equiv 1 \mod k$ and any integers $e_1, \ldots, e_t \ge 1$, there exists a $(p_{1}^{e_1} \ldots p_{t}^{e_t}, k, 1)$-RPMD.

\section{Constructing resolvable Mendelsohn designs from cyclic groups}
\label{sec:cyclic}

In this section we construct RMDs from cyclic groups by using Theorem \ref{thm:frob} and recent results \cite{TZ-1, TZ-2, TZ-3, Z} on first-kind Frobenius circulant graphs. As usual we use $\ZZZ_n$ to denote the additive group of integers modulo $n$ and $\ZZZ_n^* = \{[u]: 1 \le u \le n-1, \gcd(n,u) = 1\}$ the multiplicative group of units of ring $\ZZZ_n$. Then $\Aut(\ZZZ_n) \cong \ZZZ_n^*$ and $\ZZZ^*_{n}$ acts on $\ZZZ_{n}$ by the usual multiplication: $[x][u] = [xu]$, $[x] \in \ZZZ_{n}$, $[u] \in \ZZZ^*_{n}$. The semidirect product $\ZZZ_{n} \rtimes \ZZZ^*_{n}$ acts on $\ZZZ_{n}$ such that $[x]^{([y], [u])} = [(x+y)u]$ for $[x], [y] \in \ZZZ_{n}$ and $[u] \in \ZZZ^*_{n}$.  

Denote by $\varphi$ Euler's totient function. The main result in this section is as follows.  

\begin{theorem}
\label{thm:circ}
Let $v = p_1^{e_1}\ldots p_t^{e_t} \ge 3$ be an odd integer in prime factorization. Then for every even divisor $k$ of $\gcd(p_{1} - 1, \ldots, p_{t} - 1)$, there exist at least $\varphi(k)^{t}$ $(v, k, 1)$-RMDs based on $\ZZZ_v$, and each of them can be constructed from some $[a] \in \ZZZ_v^*$ of the form $a = \sum_{i=1}^t (v/p_i^{e_i}) a_i b_i$ and admits $\ZZZ_v \rtimes \la [a] \ra$ as a group of automorphisms, where $b_i$ is the inverse of $v/p_i^{e_i}$ in $\FFF_{p_i^{e_i}}$, and $a_i$ satisfies $a_i \equiv \eta_i^{m_i \varphi(p_i^{e_i})/k} \mod {p_i^{e_i}}$ for a fixed primitive element $\eta_i$ of $\FFF_{p_i^{e_i}}$ and an integer $m_i$ coprime to $k$. 
\end{theorem}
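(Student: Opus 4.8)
The plan is to deduce the theorem from Theorem~\ref{thm:frob} by producing, for the cyclic kernel $K=\ZZZ_v$, an element $[a]$ of order $k$ whose cyclic subgroup $\la[a]\ra$ acts fixed-point-freely on $\ZZZ_v$. By the criterion recalled in Section~\ref{sec:def}, a nontrivial fixed-point-free subgroup $H\le\Aut(K)$ makes $K\rtimes H$ a Frobenius group, so $\ZZZ_v\rtimes\la[a]\ra$ will then be Frobenius with a complement containing $[a]$ of order $k$, and Theorem~\ref{thm:frob} will deliver a $(v,k,1)$-RMD admitting $\ZZZ_v\rtimes\la[a]\ra$. Under $\Aut(\ZZZ_v)\cong\ZZZ_v^*$, fixed-point-freeness of $\la[a]\ra$ is exactly the set of conditions $\gcd(a^j-1,v)=1$ for $1\le j\le k-1$ (multiplication by $a^j$ fixes no nonzero residue), so the task is to find $[a]\in\ZZZ_v^*$ of order $k$ meeting these coprimality conditions; I would build it coordinatewise via the Chinese Remainder Theorem, which is what produces the stated shape $a=\sum_i(v/p_i^{e_i})a_ib_i$.

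For the construction, since $v$ is odd every $p_i$ is odd, so each $\ZZZ_{p_i^{e_i}}^*$ is cyclic of order $\varphi(p_i^{e_i})=p_i^{e_i-1}(p_i-1)$ with a primitive root $\eta_i$, and the isomorphism $\ZZZ_v^*\cong\prod_i\ZZZ_{p_i^{e_i}}^*$ identifies $[a]$ with $([a_1],\dots,[a_t])$ where $a\equiv a_i\pmod{p_i^{e_i}}$; the coefficients $(v/p_i^{e_i})b_i$, with $b_i$ inverting $v/p_i^{e_i}$ modulo $p_i^{e_i}$, are the CRT idempotents realising this correspondence and yield the displayed formula. Because $k\mid p_i-1\mid\varphi(p_i^{e_i})$, the cyclic group $\ZZZ_{p_i^{e_i}}^*$ has a unique subgroup of order $k$, whose $\varphi(k)$ generators are precisely the elements $a_i=\eta_i^{m_i\varphi(p_i^{e_i})/k}$ with $\gcd(m_i,k)=1$. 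For any such choice in each coordinate, $[a]$ lies in $\ZZZ_v^*$ and has order $\mathrm{lcm}(k,\dots,k)=k$, so this step is routine once the bookkeeping is arranged.

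The main obstacle is the fixed-point-free condition, which by CRT amounts to $a_i^j\not\equiv1\pmod{p_i}$ for all $i$ and all $1\le j\le k-1$; the subtlety is that the modulus drops from $p_i^{e_i}$ to $p_i$, so knowing only that $a_i$ has order $k$ modulo $p_i^{e_i}$ is not by itself enough. I would clear this through the reduction homomorphism $\ZZZ_{p_i^{e_i}}^*\to\ZZZ_{p_i}^*$: its kernel has order $p_i^{e_i-1}$, which is coprime to $k$ since $\gcd(p_i,k)=1$ (as $k\mid p_i-1$), so the reduction is injective on the order-$k$ subgroup and sends $a_i$ to an element of order exactly $k$ in $\ZZZ_{p_i}^*$. (Equivalently, $\eta_i$ reduces to a primitive root modulo $p_i$, after which a short gcd computation shows $a_i\bmod p_i$ again has order $k$.) Hence $a_i^j\not\equiv1\pmod{p_i}$ for $1\le j\le k-1$; this gives $\gcd(a^j-1,v)=1$ for those $j$, so $\la[a]\ra$ is fixed-point-free, $\ZZZ_v\rtimes\la[a]\ra$ is Frobenius, and Theorem~\ref{thm:frob} applies.

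Finally, for the count I would note that the residues $m_i\bmod k$ coprime to $k$ range, independently across the $t$ coordinates, over $\varphi(k)^t$ tuples, giving $\varphi(k)^t$ distinct elements $[a]$ and hence $\varphi(k)^t$ designs ${\cal B}([a])$ produced as above. To see these designs are pairwise distinct when $k\ge4$, I would recover $a$ from the design itself: taking three cyclically consecutive points $z,w,u$ of a block whose consecutive difference $w-z$ is a unit of $\ZZZ_v$ (for instance the block formed by the $\la[a]\ra$-orbit of $1$), one has $u-w=a(w-z)$, whence $a=(u-w)(w-z)^{-1}$ is determined by ${\cal B}([a])$; thus $[a]\mapsto{\cal B}([a])$ is injective and the $\varphi(k)^t$ designs are distinct (for $k=2$ one has $\varphi(2)^t=1$ and nothing to check). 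I would also remark that the admissible multipliers $[a]$ are exactly those attached to first-kind Frobenius circulant graphs, so the existence of the Frobenius structure could alternatively be quoted from \cite{TZ-3}.
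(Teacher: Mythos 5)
Your proposal is correct, and its skeleton coincides with the paper's: identify $\Aut(\ZZZ_v)$ with $\ZZZ_v^*$, produce elements $[a]$ of order $k$ whose cyclic group acts fixed-point-freely on $\ZZZ_v$ (equivalently $\gcd(a^j-1,v)=1$ for $1\le j\le k-1$), and feed the Frobenius group $\ZZZ_v\rtimes\la [a]\ra$ into Theorem~\ref{thm:frob}. The two proofs part ways on how the key sub-steps are settled. First, where you build the admissible $[a]$ from scratch --- CRT idempotents $(v/p_i^{e_i})b_i$, the unique order-$k$ subgroup of each cyclic group $\ZZZ_{p_i^{e_i}}^*$, and the observation that reduction modulo $p_i$ is injective on that subgroup because the reduction kernel has order $p_i^{e_i-1}$ coprime to $k$ --- the paper simply quotes \cite[Theorem 2.7]{TZ-3} for the fact that the elements of the displayed form are precisely the order-$k$ elements acting semiregularly on $\ZZZ_v\setminus\{[0]\}$, and \cite[Lemma 4]{TZ-1} for the $\gcd$ criterion. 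Your version is self-contained and elementary; the paper's is shorter and comes with the converse classification (every admissible $[a]$ is of the stated form), which you do not need but which the paper uses to assert the count of elements is \emph{exactly} $\varphi(k)^t$. Second, and more substantively: your injectivity step --- recovering $a$ from the design via the relation $u-w=a(w-z)$ on a consecutive triple of the block through $1$, using that $a-1$ is a unit --- has no counterpart in the paper. The paper counts the $\varphi(k)^t$ elements $[a]$ and passes directly to ``at least $\varphi(k)^t$ RMDs,'' remarking only parenthetically that ${\cal B}(a^j)$ is ``not necessarily identical'' to ${\cal B}(a)$; strictly speaking that inference requires exactly the distinctness of the designs that you prove. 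So your proposal not only reproves the quoted ingredient of \cite{TZ-3} but also closes a small gap in the published counting argument.
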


\begin{remark}
\label{re:cyclic}
{\em (a) It would be interesting to understand whether and when some of these $\varphi(k)^{t}$ $(v, k, 1)$-RMDs are isomorphic to each other. A construction of such RMDs will be given in the proof of Theorem \ref{thm:circ}. 

(b) In \cite[Corollary 2]{NP} it was shown that a $(v, k, 1)$-MD admitting the holomorph $\Hol(\ZZZ_v) \cong \ZZZ_{v} \rtimes \ZZZ^*_{v}$ of $\ZZZ_v$ as a group of automorphisms exists if and only if one of the following holds: (i) $k=2$; (ii) $p \equiv 1 \mod k$ for every prime factor $p$ of $v$; (iii) $k$ is the least prime factor of $v$, $k^2$ does not divide $v$, and $p \equiv 1 \mod k$ for every prime factor $p$ of $v$ other than $k$. Theorem \ref{thm:circ} implies that under condition (ii) many $(v, k, 1)$-RMDs based on $\ZZZ_v$ exist and can be easily constructed. None of Theorem \ref{thm:circ} and \cite[Corollary 2]{NP} is implied by the other.

(c) Similar to Lemma \ref{thm:froben}, the result in Theorem \ref{thm:circ} can be stated in terms of orthomorphisms: under the same assumption $\ZZZ_v$ admits at least $\varphi(k)^{t}$ $k$-regular orthomorphisms. D. F. Hsu conjectured that, for every odd integer $n \ge 3$ and every divisor $k$ of $n-1$, a $k$-regular complete mapping of $\ZZZ_n$ exists, or equivalently a $k$-regular orthomorphism exists. This was confirmed when $k =2$ or $(n-1)/2$ (see \cite[Theorem 3]{FGT}). The following corollary of Theorem \ref{thm:circ} provides further support to Hsu's conjecture.}
\end{remark}

\begin{corollary}
\label{coro:hsu}
Let $n \ge 3$ be an odd integer and $k$ a divisor of $n-1$. If $k$ divides $p-1$ for every prime factor $p$ of $n$, then a $k$-regular complete mapping of $\ZZZ_n$ exists.  
\end{corollary}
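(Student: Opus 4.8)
The plan is to prove the slightly stronger statement that the required complete mapping can be taken to be the \emph{linear} map $\theta([x]) = [a][x]$ on $\ZZZ_n$ for a suitable unit $[a] \in \ZZZ_n^*$, which places the corollary directly inside the framework of Theorem \ref{thm:circ}. Writing $n = p_1^{e_1}\cdots p_t^{e_t}$ with the $p_i$ distinct, the hypothesis that $k \mid p-1$ for every prime factor $p$ of $n$ says exactly that $k$ divides $\gcd(p_1 - 1, \ldots, p_t - 1)$, so an element of multiplicative order $k$ exists modulo each $p_i$. The crux is to produce a single $a$ whose multiplicative order equals $k$ modulo \emph{every} divisor $m > 1$ of $n$. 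Granting this, a standard computation shows that the cycle through a nonzero $[x]$ under $\theta$ has length equal to the order of $a$ modulo $n/\gcd(x,n)$; as $[x]$ runs over the nonzero elements, $n/\gcd(x,n)$ runs over all divisors $m > 1$, so every nontrivial cycle has length $k$ and $\theta$ is $k$-regular in canonical form.

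First I would construct $a$ by the Chinese Remainder Theorem. Fixing a primitive root $\eta_i$ modulo $p_i^{e_i}$ and setting $a \equiv \eta_i^{\varphi(p_i^{e_i})/k} \pmod{p_i^{e_i}}$ forces $a$ to have order exactly $k$ modulo $p_i^{e_i}$; reducing mod $p_i$ and using that $\eta_i$ reduces to a primitive root there (together with $\gcd(p_i, p_i-1) = 1$) shows the order of $a$ is also $k$ modulo $p_i$. Since the order modulo $p_i^{f_i}$ is a multiple of the order modulo $p_i$ and a divisor of the order modulo $p_i^{e_i}$, it is squeezed to $k$ for every $1 \le f_i \le e_i$; hence the order of $a$ modulo an arbitrary divisor $m > 1$ is the least common multiple of such values, namely $k$. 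This is precisely the orthomorphism furnished by Theorem \ref{thm:circ} when $k$ is even, and the same recipe makes sense verbatim for odd $k$ as well.

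It then remains to upgrade this $k$-regular orthomorphism to a complete mapping. As $\theta$ is linear, $\hat{\theta}([x]) = [x] + [a][x] = [a+1][x]$ is a bijection if and only if $\gcd(a+1, n) = 1$, i.e.\ $a \not\equiv -1 \pmod{p_i}$ for every $i$. But $a$ has order $k$ modulo $p_i$ whereas $-1$ has order $2$, so $a \not\equiv -1 \pmod{p_i}$ as soon as $k \neq 2$. Consequently, for every $k \ge 3$ satisfying the hypotheses the linear map $\theta$ is simultaneously a $k$-regular orthomorphism and a $k$-regular complete mapping, which settles all these cases uniformly.

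The main obstacle is the case $k = 2$, and it is worth noting why it cannot be handled by the orthomorphism-to-complete-mapping correspondence $\theta \mapsto \bar{\theta}$: that correspondence does not preserve cycle structure (for the linear map it replaces $a$ by $a-1$), so one genuinely must verify the complete-mapping condition on $a$ itself rather than transfer it from Theorem \ref{thm:circ}, and when $k = 2$ the only linear candidate is $[x]\mapsto -[x]$, which fails exactly because $a+1 \equiv 0$. For $k = 2$, however, the divisibility hypotheses hold automatically, since every prime factor of the odd integer $n$ satisfies $2 \mid p-1$; thus the required $2$-regular complete mapping of $\ZZZ_n$ is guaranteed by the known confirmation of Hsu's conjecture for $k = 2$ \cite{FGT}. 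Combining the two cases completes the proof.
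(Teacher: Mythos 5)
Your proposal is correct, and it is in fact more careful than the paper's own derivation, which is essentially implicit: the paper obtains Corollary~\ref{coro:hsu} from Theorem~\ref{thm:circ} through Remark~\ref{re:cyclic}(b), i.e.\ it restates Theorem~\ref{thm:circ} as the existence of $k$-regular orthomorphisms of $\ZZZ_v$ and then invokes the asserted equivalence that ``a $k$-regular complete mapping of $\ZZZ_n$ exists, or equivalently a $k$-regular orthomorphism exists,'' with no further argument. Your route differs at exactly the two points where that derivation is thin. First, Theorem~\ref{thm:circ} is stated only for \emph{even} $k$, while the corollary allows odd $k$; your Chinese-Remainder construction of an element $a$ whose order is $k$ modulo every divisor $m>1$ of $n$ (your order computations, and the identification of cycle lengths of $x \mapsto ax$ with orders of $a$ modulo $n/\gcd(x,n)$, are all correct) works for every $k$ and so covers the odd case directly, which the paper never addresses. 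Second, and more importantly, the claimed equivalence between $k$-regular complete mappings and $k$-regular orthomorphisms is never proved in the paper, and, as you rightly point out, the correspondences $\theta \mapsto \hat{\theta}$ and $\theta \mapsto \bar{\theta}$ do not preserve cycle structure, so it cannot simply be cited; you replace it by the direct verification that the \emph{same} linear map $x \mapsto ax$ is a complete mapping, namely $\gcd(a+1,n)=1$, which holds for $k \ge 3$ because $a \equiv -1 \pmod{p_i}$ would force $a$ to have order $2 \ne k$ modulo $p_i$. That is exactly the right fix, and it buys a self-contained proof for all $k \ge 3$. For $k=2$ you fall back on \cite{FGT}, which is also what Remark~\ref{re:cyclic}(b) does; be aware, however, that under the paper's literal definition of $k$-regularity (cycle lengths of $\theta$ itself) a $2$-regular complete mapping of an abelian group cannot exist at all, since $\theta^2(x)=x$ gives $\hat{\theta}(\theta(x)) = \theta(x)+\theta^2(x) = x + \theta(x) = \hat{\theta}(x)$ with $\theta(x) \ne x$, so $\hat{\theta}$ is non-injective on every $2$-cycle --- your observation that the unique linear candidate $a=-1$ fails is a special case of this general obstruction. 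So the $k=2$ case rests on the differing conventions of \cite{FGT}; that is a defect of the statement and of Remark~\ref{re:cyclic}(b) rather than of your argument, but it is worth recording that your citation there covers a case the paper's own machinery provably cannot.
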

    
\begin{proof}{\bf of Theorem \ref{thm:circ}}~
Let $v$ and $k$ be as in Theorem \ref{thm:circ}. Let $[a]$ be an element of $\ZZZ_v^*$ with order $k$ such that $H = \la [a] \ra$ is semiregular on $\ZZZ_v \setminus \{[0]\}$. (See below for the existence of such elements $[a]$.) Then $\ZZZ_v \rtimes H$ is a Frobenius group. It can be verified (see \cite[Lemma 4]{TZ-1}) that the semiregularity of $H$ on $\ZZZ_v \setminus \{[0]\}$ is equivalent to saying that $[a^i - 1] \in \ZZZ_v^*$ for $1 \le i \le k-1$. 

Let $H[x_1] = H$, $H[x_2]$, $\ldots$, $H[x_r]$ be the $H$-orbits on $\ZZZ_v \setminus \{[0]\}$, where we assume $[x_1] = [1]$ without loss of generality. Then each $H[x_i]$ has length $k$ and $kr = v-1$. Define $\phi_a: \ZZZ_v \rightarrow \ZZZ_v$ by $\phi_a([0]) = [0]$ and $\phi_a([a^sx_i]) = [a^{s+1}x_{i}]$, $1 \le i \le r,\, 0 \le s \le k-1$. In line with the proof of Lemma \ref{thm:froben}, since $[a-1] \in \ZZZ_v^*$ as noted above, one can verify that the mapping from $\ZZZ_v$ to itself defined by $\bar{\phi}_a([0]) = [0]$ and $\bar{\phi}_a([a^sx_i]) = \phi_a([a^sx_i]) - [a^sx_i] = [a^s (a-1) x_i]$ is bijective. Hence $\phi_a$ is a $k$-regular orthomorphism of $\ZZZ_v$. Thus, by Theorem \ref{thm:frob} and its proof, $\phi_a$ produces a $(v, k, 1)$-RMD, ${\cal B}(a)$, whose base blocks are $B_{a, i} = ([x_i], [a x_i], [a^{2}x_i], \ldots, [a^{k-1}x_i])$, for $1 \le i \le r$. More explicitly, ${\cal B}(a) = \{[x]+B_{a, i}: [x] \in \ZZZ_n, 1 \le i \le r\}$, where we set $[x] + B_{a, i} = ([x+x_i], [x+a x_i], [x+a^{2}x_i], \ldots, [x+a^{k-1}x_i])$. 
 
It was proved in \cite[Theorem 2.7]{TZ-3} that every element $[a]$ of $\ZZZ_v^*$ with order $k$ such that $\la [a] \ra$ is semiregular on $\ZZZ_v \setminus \{[0]\}$ can be constructed in the way as stated in the theorem, and vice versa. Since $k$ divides each $p_i - 1$, it divides $p_i^{e_i} - 1$ and so $\ZZZ_{p_i^{e_i}}^*$ has exactly one subgroup of order $k$. Thus, as noted in the proof of \cite[Theorem 2.7]{TZ-3}, without loss of generality we may fix the primitive element $\eta_i$ of $\FFF_{p_i^{e_i}}$. There are exactly $\varphi(k)$ values $a_i$ satisfying $a_i \equiv \eta_i^{m_i \varphi(p_i^{e_i})/k} \pmod {p_i^{e_i}}$, each corresponding to a different value of $m_i$. Since this is true for $i = 1, \ldots, t$ and since each $b_i \pmod {p_i^{e_i}}$ is unique, it follows that there are exactly $\varphi(k)^t$ different elements $[a]$ of $\ZZZ_v^*$ with order $k$ such that $\la [a] \ra$ is semiregular on $\ZZZ_v \setminus \{[0]\}$. (Note that if $[a]$ is such an element, then so is $[a^j]$ provided $\gcd(j, k) = 1$. In this case, $\la [a] \ra = \la [a^{j}] \ra$ but the $(v, k, 1)$-RMD ${\cal B}(a^j)$ is not necessarily identical to ${\cal B}(a)$.) Therefore, there exist at least $\varphi(k)^t$ $(v, k, 1)$-RMDs each with the stated properties.
\qed 
\end{proof}

The following is a corollary of Theorem \ref{thm:circ} and \cite[Theorem 2]{TZ-1}.

\begin{corollary}
\label{thm:4}
Let $v = p_1^{e_1}\ldots p_t^{e_t} \ge 5$ be an integer in prime factorization such that each $p_i \equiv 1 \mod 4$. Then there are at least $2^{t}$ $(v, 4, 1)$-RMDs based on $\ZZZ_v$, and each of them is produced by a solution $a$ to the congruence equation $x^2 + 1 \equiv 0 \mod v$ and admits $\ZZZ_v \rtimes \la [a] \ra$ as a group of automorphisms.
\end{corollary}

\begin{proof}
Since each $p_i \equiv 1 \mod 4$, $4$ is a divisor of $\gcd(p_1 -1, \ldots, p_t-1)$. Since $\varphi(4) = 2$, by Theorem \ref{thm:circ} there are at least $2^{t}$ $(v, 4, 1)$-RMDs. Moreover, each of them is constructed from an element $[a] \in \ZZZ_v^*$ of order $4$ such that $\la [a] \ra$ is semiregular on $\ZZZ_v \setminus \{[0]\}$ and admits corresponding $\ZZZ_v \rtimes \la [a] \ra$ as a group of automorphisms. It can be verified (see \cite[Theorem 2]{TZ-1}) that such elements $a$ are in one-to-one correspondence with the solutions to $x^2 + 1 \equiv 0 \mod v$. 
\qed
\end{proof}

In \cite[Theorem 2]{TZ-2} it was proved that, if every prime factor of an integer $v \ge 7$ is congruent to 1 modulo 6, then there are exactly $2^{t}$ ($= \varphi(6)^t$) elements $[a]$ of $\ZZZ_v^*$ of order 6 such that $\la [a] \ra$ is semiregular on $\ZZZ_v \setminus \{[0]\}$, and moreover they are in one-to-one correspondence with the solutions to $x^2 - x + 1 \equiv 0 \mod v$. Thus, similar to Corollary \ref{thm:4}, we obtain the following special case of Theorem \ref{thm:circ}. 

\begin{corollary}
\label{thm:6}
Let $v = p_1^{e_1}\ldots p_t^{e_t} \ge 7$ be an integer in prime factorization such that each $p_i \equiv 1 \mod 6$. Then there are at least $2^{t}$ $(v, 6, 1)$-RMDs based on $\ZZZ_v$, and each of them is produced by a solution $a$ to the congruence equation $x^2 - x + 1 \equiv 0 \mod v$ and admits $\ZZZ_v \rtimes \la [a] \ra$ as a group of automorphisms.
\end{corollary}

We conclude this paper by an example which illustrates how RMDs in Corollary \ref{thm:4} can be constructed following the proof of Theorem \ref{thm:circ}.  
 
\begin{example}
\label{ex:4}
{\em 
Consider $v=53$. Then $x=23$ is a solution to $x^2 + 1 \equiv 0 \mod 53$. Let $H = \la [23] \ra \le \ZZZ_{53}^*$. Then the $H$-orbits on $\ZZZ_{53} \setminus \{[0]\}$ are as follows: 
$$
H = \{[1], [23], [52], [30]\}, H[2] = \{[2], [46], [51], [7]\}, H[3] = \{[3], [16], [50], [37]\},  
$$
$$
H[4] = \{[4], [39], [49], [14]\}, H[24] = \{[24], [22], [29], [31]\}, H[25] = \{[25], [45], [28], [8]\}, \\
$$
$$
H[26] = \{[26], [15], [27], [38]\}, H[47] = \{[47], [21], [6], [32]\}, H[48] = \{[48], [44], [5], [9]\},
$$ 
$$
H[17] = \{[17], [20], [36], [33]\}, H[18] = \{[18], [43], [35], [10]\}, H[40] = \{[40], [19], [13], [34]\},\\
$$ 
$$
H[41] = \{[41], [42], [12], [11]\}.
$$
Thus, in view of the proof of Lemma \ref{thm:froben}, the permutation 
$$
(1\ 23\ 52\ 30) (2\ 46\ 51\ 7) \cdots (41\ 42\ 12\ 11)
$$ 
is a 4-regular orthomorphism of $\ZZZ_{53}$. In view of the proof of Theorem \ref{thm:circ}, this orthomorphism gives rise to the $(53, 4, 1)$-RMD ${\cal B}(53)$ whose basis ${\cal B}_0 (53)$ consists of 
$$
(1, 23, 52, 30), (2, 46, 51, 7), \ldots, (41, 42, 12, 11).
$$ 
Note that ${\cal B}(53) = dev ({\cal B}_0 (53)) = \cup_{i=0}^{52} ({\cal B}_0 (53) + i)$, where ${\cal B}_0 (53) + i$ is obtained from ${\cal B}_0 (53)$ by adding $i$ to each block of ${\cal B}_0 (53)$ coordinate-wise with addition modulo 53. For example, ${\cal B}_0 (53) + 1$ consists of $(2, 24, 0, 31)$, $(3, 47, 52, 8), \ldots, (42, 43, 13, 12)$. 
}
\end{example}

\medskip
\noindent {\bf Acknowledgements}~ Zhou was supported by a Future Fellowship (FT110100629) of the Australian Research Council. 

\small
{

}


\begin{thebibliography}{99}

\bibitem{Abel-Ben}
R. J. R. Abel and F. E. Bennett, The existence of $(v, 6, \l)$-perfect Mendelsohn designs with $\l  > 1$, {\em Des. Codes Crypt.} {\bf 40} (2006), 211--224.

\bibitem{Abel-Ben-Ge}
R. J. R. Abel, F. E. Bennett and G. Ge, Almost resolvable perfect Mendelsohn designs with block size five, {\em Discrete Applied Math.} {\bf 116} (2002), 1--15.

\bibitem{Abel-Ben-Ge1}
R. J. R. Abel, F. E. Bennett and G. Ge, Resolvable perfect Mendelsohn designs with block size five, {\em Discrete Math.} {\bf 247} (2002), no. 1-3, 1--12.

\bibitem{Abel-Ben-Zh}
R. J. R. Abel, F. E. Bennett and H. Zhang, Perfect Mendelsohn designs with block size six, {\em J. Statistical Planning and Inference} {\bf 86} (2000), 287--319.

\bibitem{Ben2}
F. E. Bennett, Direct constructions for perfect 3-cyclic designs, {\em Ann. Discrete Math.} {\bf 15} (1982), 63--68.

\bibitem{Ben}
F. E. Bennett, Recent progress on the existence of perfect Mendelsohn designs, {\em J. Statistical Planning and Inference} {\bf 94} (2001), 121--138.

\bibitem{Ben-Men}
F. E. Bennett, E. Mendelsohn and N. S. Mendelsohn, Resolvable perfect cyclic designs. {\em J. Combin. Theory Ser. A} {\bf 29} (1980), 142--150.

\bibitem{Ben-So}
F. E. Bennett and D. Sotteau, Almost resolvable decomposition of $K_n^*$, {\em J. Combin. Theory Ser. B} {\bf 30} (1981), no. 2, 228--232.

\bibitem{Ben-Zh}
F. E. Bennett and X. Zhang, Resolvable Mendelsohn designs with block size 4, 
{\em Aequationes Math.} {\bf 40} (1990), no. 2-3, 248--260. 

\bibitem{BGS}
J.-C. Bermond, A. Germa and D. Sotteau, Resolvable decomposition of $K_n^*$, {\em J. Combin. Theory Ser. A} {\bf 26} (1979), no. 2, 179--185.
 
\bibitem{BS} 
J.-C. Bermond and D. Sotteau, Graph decompositions and $G$-designs, in: Proc. 5th British Combin. Conf.,  {\em Congr. Numer.} {\bf 15} (1975), 53--72.  
  
\bibitem{Boy}
T. Boykett, Construction of Ferrero pairs of all possible orders, {\em SIAM J. Discrete Math.} {\bf 14} (2001), no. 3, 283--285. 
 
\bibitem{BH}
N. Brand and W. C. Huffman, Mendelsohn designs admitting the affine group, {\em Graphs and Combinatorics} {\bf 3} (1987), 313--324. 

\bibitem{Dixon-Mortimer}
J. D. Dixon and B. Mortimer, {\em Permutation Groups}, Springer,
New York, 1996.

\bibitem{Evens1}
A. B. Evans, The admissibility of sporadic simple groups, {\em J. Algebra} {\bf 321} (2009), 1407--1428.

\bibitem{Evens4}
A. B. Evans, The existence of strong complete mappings of finite groups: A survey, {\em Discrete Math.} {\bf 313} (2013), no. 11, 1191--1196.

\bibitem{FGT}
R. J. Friedlander, B. Gordon and P. Tannenbaum.
Partitions of groups and complete mappings,
{\em Pacific J. Math.} {\bf 92} (1981), no. 2, 283--293. 

\bibitem{Goren}
D. Gorenstein, {\em Finite Groups}, 2nd ed., Chelsea, New York, 1980.

\bibitem{Hall-Paige}    
M. Hall and L. J. Paige. Complete mappings of finite groups, {\em Pacific J. Math.} {\bf 5} (1955),  541--549.

\bibitem{HK}
D. F. Hsu and A. D. Keedwell, Generalized complete mappings, neofields, sequenceable groups and block designs I, {\em Pacific J. Math.} {\bf 111} (1984), 317--332.

\bibitem{HK1}
D. F. Hsu and A. D. Keedwell, Generalized complete mappings, neofields, sequenceable groups and block designs II, {\em Pacific J. Math.} {\bf 117} (1985), 291--312.

\bibitem{Mann}
H. B. Mann, The construction of orthogonal latin squares, {\em Ann. Math. Statist.} {\bf 12} (1942), 418--423.

\bibitem{Mend1}
N. S. Mendelsohn, A natural generalization of Steiner triple systems, in: Computers in Number Theory, Academic Press, New York, 1971, pp. 323--338.

\bibitem{Mend}
N. S. Mendelsohn, Perfect cyclic designs, {\em Discrete Math.} {\bf 20} (1977), 63--68.

\bibitem{Miao-Zhu}
Y. Miao and L. Zhu, Perfect Mendelsohn designs with block size six, {\em Discrete Math.} {\bf 143} (1995), 189--207.

\bibitem{NP}
C. Nilrat and C. E. Praeger, Balanced directed cycle designs based on cyclic groups, {\em J. Austral. Math. Soc.} {\bf 58} (1995), 210--218. 

\bibitem{P}
C. E. Praeger, Balanced directed cycle designs based on groups, {\em Discrete Math.} {\bf 92} (1991), 275--290. 

\bibitem{TZ-1}
A. Thomson and S. Zhou, Frobenius circulant graphs of valency four, {\em J. Austral. Math. Soc.} {\bf 85} (2008), 269--282. 

\bibitem{TZ-2}
A. Thomson and S. Zhou, Frobenius circulant graphs of valency six, Eisenstein-Jacobi networks, and hexagonal meshes, {\em Europ. J. Combin.} {\bf 38} (2014), 61--78. 

\bibitem{TZ-3}
A. Thomson and S. Zhou, Rotational circulant graphs, {\em Discrete Applied Math.} {\bf 162} (2014),  296--305. 
 
\bibitem{Wilcox}
S. Wilcox. Reduction of the Hall-Paige conjecture to sporadic simple groups, {\em J. Algebra} {\bf 321} (2009), no. 5, 1407--1428. 

\bibitem{Yin}
J. Yin, The existence of $(v, 6, 3)$-PMDs, {\em Math. Appl.} {\bf 6} (1993), 457--462.

\bibitem{Zh2}
X. Zhang, Constructions of resolvable Mendelsohn designs, 
{\em Ars Combin.} {\bf 34} (1992), 225--250. 

\bibitem{Zh1}
X. Zhang, On the existence of $(v,4,1)$-RPMD, {\em Ars Combin.} {\bf 42} (1996), 3--31. 

\bibitem{Zh}
X. Zhang, A few more RPMDs with $k=4$,
{\em Ars Combin.} {\bf 74} (2005), 187--200. 

\bibitem{Z}
S. Zhou, A class of arc-transitive Cayley graphs as models for interconnection networks, {\em SIAM J. Disc. Math.} {\bf 23} (2009), 694--714. 

\end{thebibliography}
\end{document}